\documentclass{amsart}
\usepackage{shortsalch, xy}
\xyoption{all}
\title{Failure of flat descent of model structures on module categories.}
\date{March 2014.}
\begin{document}
\begin{abstract}
We prove that the collection of model structures on (quasicoherent) module categories does not obey flat descent. In particular, it fails to be a separated presheaf, in the fppf topology, on Artin stacks.
\end{abstract}
\maketitle
\tableofcontents
\section{Introduction.}

Recently H. Bacard asked us a question we had ourselves already been wondering about: does there exist a ``local-to-global principle'' for model structures on module categories? In other words, if one has an open cover of a scheme or a stack, and the structure of a model category on the (quasicoherent) module category of each open in the cover, can one ``paste'' the model structures into a model structure on (quasicoherent) modules over the whole scheme or stack? In this note we show that the answer to this question is {\em no}, at least in the flat topology. 

We now provide more details. For $X$ a scheme or an algebraic stack, let $Model(X)$ be the collection of cofibrantly generated closed model structures on the category of quasicoherent $\mathcal{O}_X$-modules.
(In general this collection is not known to form a set, or even a class; it is just a collection. For many practical choices of $X$, however, it forms a set and
even a finite set. See \cite{computationofmodelstructures} for details.) 

One wants to know if $Model$ is a sheaf in various topologies. More precisely: 
let $X$ be an algebraic stack in a topology $\tau$ and choose an $\tau$-cover $Y \rightarrow X$. One wants to know if 
\[ \xymatrix{ Model(X) \ar[r] &  Model(Y) \ar@<1ex>[r]\ar@<-1ex>[r] & Model(Y\times_X Y) }\]
is an equalizer sequence. In other words, given a model structure on the category of $\mathcal{O}_Y$-modules which restricts compatibly 
on overlaps between components of the cover $Y$, does there exist a unique model structure on the category of $\mathcal{O}_X$-modules
which restricts to the chosen one on $\mathcal{O}_Y$-modules? This is the problem of {\em descent of model structures on module categories.}

In this short note we give a negative answer to this question for the fppf topology, by producing an explicit counterexample in Thm.~\ref{main thm}.
We do not know an answer to the question for coarser topologies but we (with H. Bacard, M. Frankland, D. Schaeppi) have explored the same question
for the Zariski topology, where it seems more plausible that this question of descent has a positive answer, and perhaps interesting results can be found in that direction. We are grateful to J. F. Jardine for hosting us during
a visit to Western University, where these questions came up in conversation.

\section{$Model$ as a presheaf.}

\begin{definition}\label{def of model as presheaf}
Let $\tau$ be one of the following topologies: Zariski, \'{e}tale, or fppf.
Let $X$ be an algebraic stack in the topology $\tau$ and let $\Alg\Stacks_{\tau}(X)$ be the category of algebraic stacks $Y$ in the $\tau$-topology equipped with
maps $Y\rightarrow X$ which are components of some covering family in $\tau$.

For every object $Y$ of $\Alg\Stacks_{\tau}(X)$, let $Model(Y)$ be the collection of cofibrantly generated closed model structures on the category of quasicoherent $\mathcal{O}_Y$-modules.

For every map $f: Y^{\prime}\rightarrow Y$ in $\Alg\Stacks_{\tau}(X)$, let $Model(f): Model(Y) \rightarrow Model(Y^{\prime})$ be the partially-defined function sending
a model structure on quasicoherent $\mathcal{O}_{Y}$-modules to the transfer model structure, if it exists, on quasicoherent $\mathcal{O}_{Y^{\prime}}$-modules, i.e., the model structure
in which a map $g: M \rightarrow N$ of quasicoherent $\mathcal{O}_{Y^{\prime}}$-modules is a weak equivalence (respectively, fibration) if and only if
$f_*g: f_*M \rightarrow f_*N$ is a weak equivalence (respectively, fibration)
in the given model structure on quasicoherent $\mathcal{O}_Y$-modules.
\end{definition}
In the literature there is a small point of difference between various definitions of algebraic stack: the issue is what condition one requires of the diagonal map. The reader can choose whatever diagonal conditions they like best, for the purposes of Def.~\ref{def of model as presheaf}. Our counterexample (in Thm.~\ref{main thm}) to fppf descent of model structures, however,
works in all the definitions of fppf algebraic stack which are in common circulation, since in our counterexample all diagonal maps are affine, hence also quasicompact, quasiseparated, etc. 

The transfer model structure exists if and only if, for every 
morphism $g$ of quasicoherent $\mathcal{O}_{Y^{\prime}}$-modules which is a 
transfinite composite of pushouts of coproducts of morphisms 
$f^*h$ with $h$ an acyclic cofibration in the given model structure
on $\mathcal{O}_Y$-modules, the morphism $f_*g$ is a weak equivalence
in the given model structure on $\mathcal{O}_Y$-modules. This is 
a simple application of Crans's theorem on existence of transfer
model structures, from \cite{MR1346427}.

The assumption that our model structures be cofibrantly generated
is an unnecessary assumption in many situations of practical interest.
In \cite{computationofmodelstructures} we show that, if a category admits a 
``saturating basis,'' then every model structure on that category
is cofibrantly generated. It then follows from the Cohen-Kaplansky theorem (from \cite{MR0043073}) that, if $R$ is an Artinian principal ideal ring, 
then every model structure on the category of
$R$-modules is cofibrantly generated. We give more information and 
explicit computations in \cite{computationofmodelstructures}. It is probably the case that
very many module categories have the property that all their model structures
are cofibrantly generated. 
In any case, leaving out the adjectives ``cofibrantly generated'' or ``closed,'' or both, from Def.~\ref{def of model as presheaf} does not deter Thm.~\ref{main thm} from providing a counterexample to $Model$ being a sheaf. 

\section{$Model$ is not an fppf sheaf.}

\begin{definition}
Let $k$ be a field of characteristic $2$ and let $\alpha_2$ be the algebraic group over $k$ 
co-represented by the Hopf algebra $k[x]/x^2$ with $x$ primitive. 
That is, $\alpha_2$ is the algebraic group given on affines by letting $\alpha_2(\Spec R)$ be the group of square-zero elements of
$R$, under addition.

Let $B\alpha_2$ be the (Artin) stack of $\alpha_2$-torsors, i.e., the
fppf algebraic stack associated to the group(oid) affine scheme
$(\Spec k, \Spec k[x]/x^2)$. We will write $\Spec k \rightarrow B\alpha_2$
for the cover naturally associated to the presentation of
$B\alpha_2$ given by the groupoid affine scheme $(\Spec k, \Spec k[x]/x^2)$.
Finally, let $j_1,j_2$ denote the two (partially-defined) maps
\[ \xymatrix{  Model(\Spec k) \ar@<1ex>[r]\ar@<-1ex>[r] & Model(\Spec k \times_{B\alpha_2}\Spec k) }\]
and let $i$ denote the (partially-defined) map
\[ Model(B\alpha_2)\rightarrow Model(\Spec k).\]
\end{definition}

\begin{theorem}\label{main thm}
There exist two distinct elements $a,b$ in $Model(B\alpha_2)$ 
such that:
\begin{itemize}
\item $i(a)$ and $i(b)$ are well-defined,
\item $i(a) = i(b)$,
\item and $j_1(i(a))$ and $j_2(i(a))$ are well-defined (and consequently
$j_1(i(a)) = j_2(i(a))$.
\end{itemize}
Consequently, even if $Model$ is a well-defined presheaf, it is not a 
separated presheaf in the fppf topology, hence not a sheaf in the fppf topology.
\end{theorem}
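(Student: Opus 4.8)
The plan is to make the entire diagram explicit in module-theoretic terms and then write down the two model structures by hand. Set $R=k[x]/x^{2}$. Since $x$ is primitive, an $\alpha_2$-comodule is exactly a $k$-vector space equipped with a square-zero endomorphism, so there is an equivalence $\mathrm{QCoh}(B\alpha_2)\simeq R\text{-mod}$; also $\mathrm{QCoh}(\Spec k)=\mathrm{Vect}_k$, and since $\Spec k\times_{B\alpha_2}\Spec k=\alpha_2=\Spec R$ we get $\mathrm{QCoh}(\Spec k\times_{B\alpha_2}\Spec k)=R\text{-mod}$ again. Both $R$ and $k$ are Artinian principal ideal rings, so by the Cohen--Kaplansky input recalled above every model structure on either category is automatically cofibrantly generated, and that adjective may be dropped. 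For the atlas $f\colon\Spec k\to B\alpha_2$ the pullback $f^{*}$ is the forgetful functor $R\text{-mod}\to\mathrm{Vect}_k$ and its right adjoint $f_{*}$ is coinduction along $k\to R$; because $R$ is a Frobenius $k$-algebra, $f_{*}\cong R\otimes_k(-)$, the free-module functor. For either projection $\mathrm{pr}_\ell\colon\alpha_2\to\Spec k$ the pushforward $(\mathrm{pr}_\ell)_{*}$ is restriction of scalars along $k\to R$. Thus $i$ carries a model structure $m$ on $R\text{-mod}$ to the classes of morphisms $g$ of $\mathrm{Vect}_k$ for which $R\otimes_k g$ is a weak equivalence (resp.\ fibration) for $m$, whenever those classes form a model structure, and $j_\ell$ carries a model structure on $\mathrm{Vect}_k$ to the one on $R\text{-mod}$ detected by restriction of scalars.

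For the two model structures I would take, on $R\text{-mod}$: $a=m_{\mathrm{proj}}$, with every morphism a weak equivalence, every epimorphism a fibration, and the maps having the left lifting property against all epimorphisms as cofibrations (this is cofibrantly generated by $\{0\to R\}$); and $b=m_{\mathrm{split}}$, with every morphism a weak equivalence, every split epimorphism a fibration, and every split monomorphism a cofibration (this is cofibrantly generated by $\{0\to R,\ 0\to k\}$, using that $R$ is of finite representation type, so that, by pure-semisimplicity, every $R$-module is a direct sum of copies of $R$ and of $k$). These are distinct: the quotient $R\to R/xR$ is a fibration of $m_{\mathrm{proj}}$ but, being a non-split epimorphism, not a fibration of $m_{\mathrm{split}}$. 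Now let $m_{\star}$ be the model structure on $\mathrm{Vect}_k$ with all morphisms weak equivalences, monomorphisms cofibrations and epimorphisms fibrations --- a genuine, cofibrantly generated model structure, since $(\mathrm{mono},\mathrm{epi})$ is a weak factorization system on the semisimple category $\mathrm{Vect}_k$. Because $R\otimes_k(-)$ is exact, faithful and valued in free modules, for any morphism $g$ of $\mathrm{Vect}_k$ the map $R\otimes_k g$ is automatically a weak equivalence of $a$ and of $b$; moreover $R\otimes_k g$ is an $a$-fibration iff it is an epimorphism iff $g$ is an epimorphism, and a $b$-fibration iff it is a split epimorphism iff (again) $g$ is an epimorphism. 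Hence the classes of maps detecting $i(a)$ and detecting $i(b)$ are in both cases exactly those of $m_{\star}$; as $m_{\star}$ is a cofibrantly generated model structure with the required detection property it is the transfer in either case, so $i(a)$ and $i(b)$ are defined and $i(a)=i(b)=m_{\star}$.

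To finish: transferring $m_{\star}$ along restriction of scalars $R\text{-mod}\to\mathrm{Vect}_k$ gives back $m_{\mathrm{proj}}$ on $R\text{-mod}$ (all maps are weak equivalences, and a map is a fibration iff it is an epimorphism after forgetting the $R$-module structure, i.e.\ iff it is an epimorphism), which exists, so $j_1(i(a))$ and $j_2(i(a))$ are defined. Moreover the two composites $\alpha_2\xrightarrow{\mathrm{pr}_\ell}\Spec k\xrightarrow{f}B\alpha_2$ are isomorphic $1$-morphisms (they differ by the tautological $2$-cell of the groupoid presentation of $B\alpha_2$), hence induce naturally isomorphic pullbacks on quasicoherent sheaves and the same transfer, so $j_1(i(a))=j_2(i(a))$. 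This verifies the three bullet points, with $a\neq b$ distinct elements of $Model(B\alpha_2)$ having equal (and descending) image in $Model(\Spec k)$; hence the first arrow in the equalizer diagram of the Introduction is not injective, $Model$ is not a separated presheaf, and so not a sheaf, in the fppf topology. Since every diagonal occurring here is affine, the conclusion is insensitive to which diagonal hypotheses one builds into the definition of algebraic stack.

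The main obstacle is the second paragraph: verifying that $m_{\mathrm{proj}}$, $m_{\mathrm{split}}$ and $m_{\star}$ really are model structures --- that is, running the small-object argument for the stated generating sets and checking the lifting and factorization axioms, and, for $m_{\mathrm{split}}$, using the finite representation type of $R$ both to see that $\{0\to R,\ 0\to k\}$ cofibrantly generates the split monomorphisms and to see that the split epimorphisms are precisely the maps with the right lifting property against that set. The conceptual point that makes the counterexample work --- and that I would highlight --- is that $(\mathrm{mono},\mathrm{epi})$ is a weak factorization system on $\mathrm{Vect}_k$ but not on the non-semisimple category $R\text{-mod}$, so two genuinely different model structures upstairs are compelled to coincide downstairs. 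One should also take care over the two basic identifications $\mathrm{QCoh}(B\alpha_2)\simeq R\text{-mod}$ and $f_{*}\cong R\otimes_k(-)$, since the whole argument rests on $f_{*}$ factoring through the free modules.
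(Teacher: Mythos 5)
Your argument is correct, but it is a genuinely different counterexample from the one in the paper, so it is worth recording how the two differ. The paper's pair of model structures on quasicoherent $\mathcal{O}_{B\alpha_2}$-modules (viewed there as $k[x]/x^2$-comodules rather than, via self-duality of $\alpha_2$, as $k[x]/x^2$-modules) consists of the discrete model structure together with the coreflection model structure attached to the coreflective subcategory of $x$-trivial comodules: weak equivalences are the maps inducing isomorphisms on maximal $x$-trivial subcomodules and \emph{all} maps are fibrations, so the collapse under $i$ happens in the weak equivalences --- the maximal $x$-trivial subcomodule of an extended comodule $M\otimes_k k[x]/x^2$ is $M$ itself, whence both structures transfer to the discrete model structure on $k$-modules. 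Your pair instead has \emph{all} maps as weak equivalences in both structures and differs only in the fibrations (epimorphisms versus split epimorphisms), so the collapse happens in the fibrations --- every epimorphism onto a free $k[x]/x^2$-module splits --- and your common image downstairs is the $(\mathrm{mono},\mathrm{epi})$ model structure on $k$-vector spaces rather than the discrete one. What your route buys is self-containedness: the projective and split weak factorization systems, and the identification of split epimorphisms as the maps with the right lifting property against $\{0\to R,\,0\to k\}$ via the decomposition of every $k[x]/x^2$-module as a sum of copies of $R$ and $k$, are elementary, whereas the paper invokes a general existence theorem for model structures arising from coreflective subcategories (Cassidy--H\'ebert--Kelly and a companion paper). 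What the paper's route buys is that every transfer in sight lands on the discrete model structure, so essentially nothing about factorizations needs to be re-verified downstairs. One small repair: your justification of $j_1(i(a))=j_2(i(a))$ via the $2$-cell relating the composites $f\circ\mathrm{pr}_\ell$ is aimed at the wrong square, since $j_\ell$ is the transfer along $\mathrm{pr}_\ell$ alone; the relevant fact is simply that the two projections $\Spec k\times_{B\alpha_2}\Spec k\to \Spec k$ coincide (both pushforwards are restriction of scalars along the unit $k\to k[x]/x^2$, as you in fact note when you compute them), which is the paper's remark that $k[x]/x^2$ is a Hopf algebra and not merely a Hopf algebroid.
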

\begin{proof}
We first define the model structures $a,b$ on quasicoherent $\mathcal{O}_{B\alpha_2}$-modules. By the usual descent argument, quasicoherent $\mathcal{O}_{B\alpha_2}$-modules are equivalent to $k[x]/x^2$-comodules. 

First, a definition: suppose $M$ is a right $k[x]/x^2$-comodule with structure map
$\psi: M \rightarrow M\otimes_k k[x]/x^2$.
We say that {\em $M$ is $x$-trivial} if the composite map
\[ M \stackrel{(\id_M\otimes_k\eta) - \psi}{\longrightarrow} M\otimes_k k[x]/x^2
\stackrel{\id_M\otimes_k \epsilon}{\longrightarrow} M\otimes_k k\]
is zero, where $\eta$ is the unit map $\eta: k \rightarrow k[x]/x^2$.
In other words, $M$ is $x$-trivial if and only if, for all $m\in M$, 
$\psi(m)$ has no nonzero terms involving $x$.

Note that every right $k[x]/x^2$-comodule $M$ has a maximal $x$-trivial 
subcomodule 
$triv(M)$, and that the inclusion 
$triv(M)\rightarrow M$ expresses the $x$-trivial comodules
as a coreflective subcategory of the category of
$k[x]/x^2$-comodules.

Let model structure $a$ on the category of $k[x]/x^2$-comodules be 
the model structure in which:
\begin{itemize}
\item the cofibrations are the pushouts of maps between $x$-trivial comodules, 
\item the weak equivalences are the maps inducing an isomorphism on
the maximal $x$-trivial subcomodules,
\item and all maps are fibrations.
\end{itemize}
In \cite{MR779198} (also, more directly, in our \cite{fibrantreplacementmonads})
the necessary results on factorization systems are proven to ensure that, 
if $\mathcal{C}$ is an
category with finite limits and colimits and $\mathcal{A}$ is a coreflective
replete subcategory of $\mathcal{C}$, then there exists a model structure on
$\mathcal{C}$ in which the cofibrations are the pushouts of maps
between objects in $\mathcal{A}$, the weak equivalences are the maps
inducing an isomorphism on applying the coreflector functor, and all maps
are fibrations. Our model structure $a$ is the special case
of this theorem where $\mathcal{A}$ consists of the $x$-trivial
comodules.

Let model structure $b$ on the category of $k[x]/x^2$-comodules be 
the discrete model structure, i.e., the model structure in which:
\begin{itemize}
\item all maps are cofibrations,
\item the weak equivalences are the isomorphisms,
\item and all maps are fibrations.
\end{itemize}

We claim that $i(a)$ and $i(b)$ both exist and in fact are each equal to the
discrete model structure on the category of $k$-modules.
Indeed, under the identification of quasicoherent $\mathcal{O}_{B\alpha_2}$-modules with $k[x]/x^2$-comodules, we have the standard identification of the
direct image and inverse image functors in terms of comodules:
the map of stacks 
$\Spec k\stackrel{f}{\longrightarrow} B\alpha_2$ induces the direct image 
functor
\[ f_*: \Mod(k) \rightarrow \Comod(k[x]/x^2)\]
sending a $k$-module $M$ to the extended comodule $M \otimes_k k[x]/x^2$,
and the inverse image functor
\[ f^*: \Comod(k[x]/x^2) \rightarrow \Mod(k)\]
sending a $k[x]/x^2$-comodule to its underlying $k$-module, i.e., forgetting the
coaction map. 

So in the transfer model structure $i(a)$ on $k$-modules,
a morphism $g$ is a weak equivalence if and only if
$g\otimes_k k[x]/x^2$ is a weak equivalence in $a$. But this implies $g$
must be an isomorphism, since the map induced by $g\otimes_k k[x]/x^2$ on the 
maximal $x$-trivial subcomodules is $g$ itself.
Hence the weak equivalences in $i(a)$ are the same as the discrete
weak equivalences. The same is trivially true for fibrations.

Since $b$ is the discrete model structure on $k[x]/x^2$-comodules, 
it is trivially true that
the weak equivalences and fibrations in $i(b)$ agree with those of the
discrete model structure on $k$-modules.

Consequently $i(a)$ and $i(b)$ both exist and are equal to the discrete
model structure on $k$-modules.

Now all that remains is to show that $j_1(i(a))$ and $j_2(i(a))$ both exist.
That they are equal if they exist 
is trivial (for example, because $j_1 =j_2$ in this setting,
since $k[x]/x^2$ is a Hopf algebra and not only a Hopf algebroid!).
The model structure $j_1(i(a))$ is the discrete model structure on $k$-modules
transferred to quasicoherent $\mathcal{O}_{\Spec k\times_{B\alpha_2}\Spec k} \cong \mathcal{O}_{\Spec k[x]/x^2}$-modules, i.e., $k[x]/x^2$-modules, along the 
direct image map, i.e., restriction of scalars. So a morphism $g$ of
$k[x]/x^2$-modules is a weak equivalence in $j_1(i(a))$ if and only if
its underlying map of $k$-modules is a weak equivalence in $i(a)$, i.e.,
if and only if $g$ is an isomorphism.
Similarly, a morphism $g$ of
$k[x]/x^2$-modules is a fibration in $j_1(i(a))$ if and only if
its underlying map of $k$-modules is a fibration in $i(a)$, i.e.,
all morphisms $g$ of $k[x]/x^2$-modules are fibrations.
Hence the weak equivalences and the fibrations in $j_1(i(a))$ 
each agree with those in the discrete model structure on $k[x]/x^2$-modules.
Hence $j_1(i(a)) = j_2(i(a)) = j_1(i(b)) = j_2(i(b))$ exists and is the
discrete model structure on $k[x]/x^2$-modules.
\end{proof}

\bibliography{/home/asalch/texmf/tex/salch}{}
\bibliographystyle{plain}
\end{document}